\documentclass[conference]{IEEEtran}

\usepackage[dvips]{graphicx}
\usepackage{amsmath,amssymb}
\usepackage{amsthm}
\usepackage{algorithm}
\usepackage{algorithmic}
\usepackage[english]{babel}

\newtheorem{theorem}{Theorem}
\newtheorem{lemma}[theorem]{Lemma}
\newtheorem{corollary}[theorem]{Corollary}

\newtheorem{conjecture}[theorem]{Conjecture}
\newtheorem{definition}[theorem]{Definition}
\newtheorem{example}[theorem]{Example}

\begin{document}
\title{On spherical 4-distance 7-designs}
\date{}

\author{
\IEEEauthorblockN{Peter Boyvalenkov}
\IEEEauthorblockA{Institute of Mathematics and Informatics, \\ Bulgarian Academy of Sciences, \\
8 G Bonchev Str.,
1113  Sofia, Bulgaria \\ peter@math.bas.bg}\medskip
\and
\IEEEauthorblockN{Navid Safaei}
\IEEEauthorblockA{Research Institute of  Policy Making, \\ Sharif University of Technology, \\
Tehran, Iran \\
navid\_safaei@gsme.sharif.edu}}
\maketitle

\begin{abstract}
We investigate spherical 4-distance 7-designs by studying their distance distributions.
We compute these distance distributions and use their product (an integer) 
to derive certain divisibility conditions relating the dimension $n$ and the cardinality $M$ of
our designs. It follows that $n$ divides $12M$ and $n+1$ divides $4M^2$. This result  
provides a good base for computer experiments to support the 
folklore conjecture that the only spherical 4-distance 7-designs are the tight spherical 7-designs.
We then proceed with a computer assisted proof of this conjecture in all dimensions $n \leq 1000$.

{\bf \emph{Keywords}}---Spherical designs, few distance sets, distance distribution.

\end{abstract}

\section{Introduction}

Let $\mathbb{S}^{n-1}$ be the unit sphere in $\mathbb{R}^n$. A finite set $C \subset \mathbb{S}^{n-1}$ is called a spherical code. 
Spherical designs were introduced by Delsarte, Goethals and Seidel \cite{DGS} as a special class of spherical codes with good combinatorial and integration properties.

\begin{definition} \label{def-designs} \cite{DGS} A spherical code $C \subset \mathbb{S}^{n-1}$ is called a spherical $\tau$-design if the quadrature 
formula
\[  \int_{\mathbb{S}^{n-1}} f(x) d\mu(x) =
                  \frac{1}{|C|} \sum_{x \in C} f(x) \]
is exact for all polynomials $f(x)= f(x_1,x_2,\ldots,x_n)$ of degree at most $\tau$. Here the measure
$\mu$ is normalized, i.e. $\mu(\mathbb{S}^{n-1})=1$. The maximal positive integer $\tau$ such that $C$ is a spherical $\tau$-design
is called strength of $C$. 
\end{definition}

A spherical $\tau$-design  $C \subset \mathbb{S}^{n-1}$  is called {\it tight} if it attain the Delsarte-Goethals-Seidel bound \cite{DGS}
\[ |C| \geq {n+k-1-\varepsilon \choose n-1}+{n+k-2 \choose n-1}, \]
where $\tau=2k-\varepsilon$, $\varepsilon \in \{0,1\}$. 

For a spherical code $C$ we consider the set of distinct inner products of points of $C$,
\[ A(C):=\{ \langle x,y \rangle : x,y \in C, x \neq y\}, \]
and denote by $s:=|A(C)|$ their number. The code $C$ is called then an $s$-distance set.

Designs with large strength $\tau$ and small number of distances $s$ are clearly interesting. It is well known \cite{DGS} that $\tau \leq 2s$
or even $\tau \leq 2s-1$ if $A(C) \cup \{1\}$ is symmetric with respect to 0. Levenshtein \cite{Lev92} proved that if 
$\tau \geq 2s-1$ or $\tau \geq 2s-2$ and $-1 \in A(C)$, then $C$ is maximal (and attains what is called now Levenshtein bounds).

In this paper we consider 
the case $(s,\tau)=(4,7)$. We prove that all 4-distance 7-designs have cardinality which has some good divisibility 
properties. We prove that the dimension $n$ divides $12M$ and, in some cases even better, $n$ divides $M$. 

All these results are collected towards the following conjecture which could be quite old but we have not seen it explicitly written. 

\begin{conjecture} \label{conj-bannai}
Let $C \subset \mathbb{S}^{n-1}$, $n \geq 2$, be a spherical 4-distance 7-design. Then $C$ is a tight spherical 7-design. In particular,
$|C|=2{n+2 \choose 3}$. 
\end{conjecture}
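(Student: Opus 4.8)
The plan is to exploit the borderline relation $\tau = 2s-1$, which here reads $7 = 2\cdot 4 - 1$, to reconstruct the entire combinatorial structure of $C$ from the two parameters $n$ and $M := |C|$ alone, and then to show that the resulting integrality and positivity constraints force the tight cardinality $M = 2\binom{n+2}{3}$. Once this value is attained, $C$ meets the Delsarte--Goethals--Seidel bound with equality and is therefore a tight spherical $7$-design by definition, which is exactly the conclusion of Conjecture \ref{conj-bannai}.

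First I would set up the Delsarte framework. Let $Q_k(t)$ be the Gegenbauer polynomials for $\mathbb{S}^{n-1}$, normalized so that $Q_k(1)=1$. Writing $A(C)=\{\alpha_1,\alpha_2,\alpha_3,\alpha_4\}$ and, for a reference point $x \in C$, the local distance distribution $B_i(x):=|\{y\in C:\langle x,y\rangle=\alpha_i\}|$, the first key step is that these numbers are independent of $x$. This follows directly from the design property: for any univariate $f$ of degree at most $\tau$, the function $y\mapsto f(\langle x,y\rangle)$ is a polynomial of degree at most $\tau$, so Definition \ref{def-designs} gives $\sum_{y\in C} f(\langle x,y\rangle) = M\int_{\mathbb{S}^{n-1}} f(\langle x,y\rangle)\,d\mu(y)$, and the right-hand side is independent of $x$ by rotational invariance of $\mu$. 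Taking $f$ to be the four Lagrange interpolators at $\alpha_1,\dots,\alpha_4$ (which have degree $3\le \tau$) then expresses each $B_i(x)$ as a quantity independent of $x$. Denote the common values by $M_1,M_2,M_3,M_4$.

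Next, specialising $f=Q_k$ and using $\int Q_k\,d\mu = 0$ for $k\ge 1$, the design equations collapse to
\[ \sum_{i=1}^4 M_i\,Q_k(\alpha_i) = -1 \quad (1\le k\le 7), \qquad \sum_{i=1}^4 M_i = M-1. \]
This is a system of eight equations in the eight unknowns $\alpha_1,\dots,\alpha_4,M_1,\dots,M_4$, with $n$ and $M$ as parameters; the borderline relation $\tau=2s-1$ is precisely what makes the count of equations match the count of unknowns. I would solve it by first treating the $M_i$ as the unknowns of a linear system whose coefficient matrix involves the values $Q_k(\alpha_i)$, obtaining each $M_i$ as an explicit rational function of $n$, $M$ and the elementary symmetric functions of the $\alpha_i$; the residual consistency relations then determine those symmetric functions, and hence the $\alpha_i$, rationally in $n$ and $M$. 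The output is closed-form expressions $M_i=M_i(n,M)$ for the components of the distance distribution.

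The arithmetic heart, as highlighted in the abstract, is that each $M_i$ is a nonnegative integer, so their product $\prod_{i=1}^4 M_i$ is a nonnegative integer as well. Clearing denominators in the closed formulas writes this product as an explicit rational function of $n$ and $M$; factoring its numerator and denominator and imposing integrality yields the divisibility relations $n\mid 12M$ and $n+1\mid 4M^2$ stated in the Introduction, with the sharper $n\mid M$ appearing when additional factors cancel. Combining these congruences with the nonnegativity constraints $M_i\ge 0$ and the lower bound $M\ge 2\binom{n+2}{3}$ should confine the admissible pairs $(n,M)$ to a short list for each fixed $n$, with $M=2\binom{n+2}{3}$ as the target. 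The main obstacle is this final elimination step: the divisibility and positivity conditions are necessary but not obviously sufficient, and discarding every spurious pair $(n,M)$ uniformly in $n$ appears to require positive-definiteness (linear-programming) inequalities sharper than the bare moment equations supply. I therefore expect the analytic argument to stop short of a fully general proof and to be completed by a finite search, which is exactly the computer-assisted verification for $n\le 1000$ announced in the abstract.
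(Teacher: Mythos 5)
Your proposal follows essentially the same route as the paper: use the borderline relation $\tau=2s-1$ to pin down the distance distribution as explicit rational functions of $n$ and $M$ (your Gegenbauer formulation is equivalent to the paper's moment system and Levenshtein quartic), impose integrality of the product of the distribution numbers to extract the divisibility conditions $n\mid 12M$ and $n+1\mid 4M^2$, and then finish by a finite computer search. You also correctly identify the same limitation the paper has: the analytic constraints are necessary but not sufficient, so the statement remains a conjecture, proved only in dimensions $n\le 1000$ by computer-assisted verification.
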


Tight spherical 7-designs are possible only in dimensions $n=3k^2-4$, $k \geq 2$, and are known only for $k=2$ and $k=3$, where the 
corresponding designs are unique up to isometry of $\mathbb{S}^{n-1}$. Further nonexistence results for tight spherical 7-designs were 
obtained in \cite{BMV04,NV13}. 

In this paper we provide a computer assisted proof of Conjecture \ref{conj-bannai} in all dimensions $n \leq 1000$ based on the 
results from Section 3. This result is confirmed in dimensions $n \leq 215$ by a brute force approach based on formulas 
from Section 2.

\section{Preliminaries}

We collect some facts about 4-distance 7-designs which follow from general results in \cite{DGS,Lev92,BDL}. 
The exposition is similar to \cite{BS20}, where we considered 3-distance 5-designs. 

Let $C \subset \mathbb{S}^{n-1}$ be a spherical 4-distance 7-design with  cardinality $M=|C|$ and
\[ A(C)=\{a,b,c,d\}, \]
where $-1 \leq a<b<c<d<1$. For $x \in C$ let 
\[ (X,Y,Z,T)=(A_a(x),A_b(x),A_c(x),A_d(x)) \] be the distance distribution of $C$ with respect of $x$, i.e.,
\[ A_a(x)=\left|\{y \in C: \langle x,y \rangle =a \}\right|, \]
etc. It is well known \cite{DGS} that the distance distribution does not depend on $x$ (this fact follows whenever
$s-1$ does not exceed $\tau$).

Then the nine numbers $a$, $b$, $c$, $d$, $X$, $Y$, $Z$, $T$, and $|C|=M$ satisfy the following system of eight equations
\begin{equation} \label{syst1}
a^iX+b^iY+c^iZ+d^iT=f_i M-1, \ \ i=0,1,\ldots,7, 
\end{equation}
where $f_i=0$ for odd $i$, $f_0=1$, $f_2=1/n$, $f_4=3/n(n+2)$ and $f_6=15/n(n+2)(n+4)$. The cardinality $M$ is restricted by
\begin{equation} \label{M-range}
2 {n+2 \choose 3} \leq M \leq {n+3 \choose 4}+{n+2 \choose 3}.
\end{equation}
The lower bound in \eqref{M-range} comes from $C$ being a 7-design, and the upper bound (so-called absolute bound) 
follows from $C$ being a 4-distance 
set. Both bounds were proved in \cite{DGS}. We will assume that $C$ is not a tight spherical 7-design, i.e. 
\[ M>2{n+2 \choose 3}=\frac{n(n+1)(n+2)}{3}. \] 

In what follows we will use for short the notations 
\[ A= 6M-n(n+1)(n+5), \]
\[ B=3M-n(n+1)(n+2). \] 

The numbers $a,b,c,d$ are the roots of the equation 
\begin{equation} \label{lev_equ}
f(t)=0,
\end{equation}
where
\begin{align*}
f(t)=& (n+2)(n+4)At^4+n(n-1)(n+1)(n+2)(n+4)t^3 \\
& -9(n+2)(4M-n(n+1)(n+3))t^2 \\
&-3n(n-1)(n+1)(n+2)t+6B. 
\end{align*}
This fact already involves the Levenshtein framework from \cite{Lev92,Lev98} and the polynomial $f(t)$ is, in the notations from
\cite{Lev92}, equal to
\[ (t-\alpha_0)(t-\alpha_1)(t-\alpha_2)(t-\alpha_3) \]
up to a multiplicative constant, where $\alpha_0=a$, $\alpha_1=b$, $\alpha_2=c$, $\alpha_3=d$, in a turn to our notations. 
We also note that $b<0<c$ and 
\begin{equation} \label{ineq-inner}
|a|>|d|>|b|>|c|
\end{equation}
(see \cite{BDL} for the general case of $s$-distance $(2s-1)$-designs). 

It is straightforward to resolve the Vandermonde-like system of equations from \eqref{syst1} with odd $i=1,3,5,7$. 
The solution
\begin{eqnarray*}
X &=& -\frac{(1-b^2)(1-c^2)(1-d^2)}{a(a^2-b^2)(a^2-c^2)(a^2-d^2)}, \\
Y &=& -\frac{(1-a^2)(1-c^2)(1-d^2)}{b(b^2-a^2)(b^2-c^2)(b^2-d^2)}, \\
Z &=& -\frac{(1-a^2)(1-b^2)(1-d^2)}{c(c^2-a^2)(c^2-b^2)(c^2-d^2)}, \\
T &=& -\frac{(1-a^2)(1-b^2)(1-c^2)}{d(d^2-a^2)(d^2-b^2)(d^2-c^2)} \\
\end{eqnarray*}
 is unique because of the inequalities \eqref{ineq-inner} (see \cite{BDL} for a treatment of the general case of $s$-distance $(2s-1)$-designs). 

\section{Divisibility results for $M$}

\subsection{Computation of $XYZT$ in terms of $n$ and $M$}

The integers $X$, $Y$, $Z$, and $T$ can be involved in different symmetric expressions. We tried many reasonable
combinations but succeeded to get results only from the investigation of $XYZT$ as shown below. 

Thus, we consider the product
\[ XYZT = \frac{f(1)^3f(-1)^3}{abcd D F^2}, \]
where 
\[ D=a_0^6\left((a-b)(a-c)(a-d)(b-c)(b-d)(c-d)\right)^2 \]
is the discriminant of $f$, 
\[ a_0= (n+2)(n+4)A \] 
is its leading coefficient and
\[ F=(a+b)(a+c)(a+d)(b+c)(b+d)(c+d). \]
Expressing the necessary symmetric functions of $a$, $b$, $c$, $d$ obtained via \eqref{lev_equ} we find that
\[ abcd=\frac{6B}{(n+2)(n+4)A}, \]
\[ f(1)f(-1)=a_0^2 \cdot \frac{12(n-1)^2(n+1)^2B}{(n+2)^2(n+4)^2A^2}, \]
\[ \frac{f(1)f(-1)}{F}=a_0^2 \cdot \frac{MA}{n^2(n+1)(n+2)} \]
(the last expression was computed by subtracting the equations with $i=2$ and $i=4$ of the system \eqref{syst1}),  
and the most complicated
\[ D= a_0^6 \cdot \frac{108(n+1)^2R(n,M)}{(n+2)^3(n+4)^5 A^6}, \]
where 
\begin{align*}
& R(n,M) = 2^{14}3^{6}M^6-2^{13}3^7n(n+1)(n+3)M^5 \\
& +2^93^5n^2(n+1)(n^4+93n^3+629n^2+1339n+818)M^4 \\
& -2^73^4n^3(n+1)^2(13n^5+436n^4+3688n^3+12782n^2 \\
& \qquad \qquad +19163n+9998)M^3 \\
& +2^23^3n^4(n+1)^3(n+2)(n+7)^2(5n^4+447n^3+3303n^2\\
& \qquad \qquad +7873n+5652)M^2 \\
& -2^23^3n^5(n+1)^4(n+2)^2(n+5)^2(n+7)^3(3n+5)M \\
& +n^6(n+1)^5(n+2)^3(n+5)^3(n+7)^4.
\end{align*}

Combining these we obtain 
\begin{equation} \label{xyzt}
XYZT=\frac{M^3(n-1)^2(n+4)^4A^7}{54n^4(n+1)^2R(n,M)}.
\end{equation}
We investigate below the integrality of $XYZT$. 

\subsection{Divisibility properties of $M$}

For a prime $p$ and a nonzero integer $x$, we will denote by $v_p(x)$ the largest power of $p$ which divides $x$ (i.e., the power of $p$ in the 
canonical representation of $x$; for example $v_2(24)=3$, $v_3(24)=1$ and $v_5(24)=0$). 

The power of $p$ in the numerator and denominator of XYZT in \eqref{xyzt} will be denoted for short by
$v_p(\mbox{num})$ and $v_p(\mbox{den})$, respectively. Since $XYZT$ is a positive integer, the inequality $v_p(\mbox{den}) \leq v_p(\mbox{num})$ holds
true for every $p$. 

\begin{lemma} \label{lem-1} 
If $XYZT$ is an integer, then the following statements hold true:

\begin{itemize}

\item[a)] if $gcd \left(n,6\right)=1$, then $n$ divides $M$; 

\item[b1)] if $gcd\left(n,3\right)=1,$ $n$ is even but $v_2(n) \not\in \{2,1+v_2(M)\}$, then $n$ divides $M;$  

\item[b2)] if $gcd\left(n,3\right)=1,$ and $v_2\left(n\right)=2$, then $n$ divides $4M;$

\item[b3)] if $gcd\left(n,3\right)=1,$ and $v_2\left(n\right)=1+v_2\left(M\right)$, then $n$ divides $2M;$

\item[c1)] if $gcd(n,2)=1$, $3$ divides $n$ but $v_3\left(n\right)\neq 1+v_3\left(M\right)$, then $n$ divides $M;$

\item[c2)] if $gcd(n,2)=1$ and $v_3\left(n\right)=1+v_3\left(M\right)$, then $n$ divides $3M$; 

\item[d1)] if $6$ divides $n$, $v_3(n) \neq 1+v_3(M),$ and $v_2(n) \not\in \{2,1+v_2(M)\}$, then $n$ divides $M;$

\item[d2)] if $6$ divides $n$, $v_3(n)=1+v_3(M)$, and $v_2(n) \not\in \{2,1+v_2(M)\}$, then $n$ divides $3M;$

\item[d3)] if $6$ divides $n$, $v_3(n)=1+v_3(M)$, and $v_2(n)=1+v_2(M),$ then $n$ divides $6M;$

\item[d4)] if $6$ divides $n$, $v_3(n)=1+v_3(M)$, and $v_2(n)=2$, then $n$ divides $12M.$ 

\end{itemize}

\end{lemma}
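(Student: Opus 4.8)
The plan is to analyze the $p$-adic valuation of the expression for $XYZT$ in \eqref{xyzt} prime by prime, focusing on the primes $2$ and $3$ (which appear in $\gcd(n,6)$) and on the generic behavior at all other primes. Since $XYZT$ is a positive integer, for every prime $p$ we must have $v_p(\mathrm{den}) \le v_p(\mathrm{num})$, where from \eqref{xyzt}
\[ v_p(\mathrm{num}) = 3v_p(M) + 2v_p(n-1) + 4v_p(n+4) + 7v_p(A), \]
\[ v_p(\mathrm{den}) = v_p(54) + 4v_p(n) + 2v_p(n+1) + v_p(R(n,M)). \]
The goal is to extract, from these inequalities, lower bounds on $v_p(M)$ relative to $v_p(n)$, and then assemble them via the Chinese Remainder / prime factorization to conclude statements of the form ``$n$ divides $kM$'' for the appropriate constant $k \in \{1,2,3,6,12\}$.

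First I would dispose of the primes $p \ge 5$ (and, for each part, whichever of $2,3$ is coprime to $n$). For such $p$, the factor $v_p(54)=0$, and $n-1$, $n+1$, $n+4$ are coprime to $n$, so $v_p(n-1)=v_p(n+1)=v_p(n+4)=0$ whenever $v_p(n)>0$. The key subcalculation is to determine $v_p(R(n,M))$ when $p \mid n$: reducing $R(n,M)$ modulo a power of $p$ using $n \equiv 0$, the polynomial $R(n,M)$ collapses to its $n$-free part, which is $2^{14}3^6 M^6$ (the only term without a factor of $n$). Hence modulo $p$ we get $R(n,M) \equiv 2^{14}3^6 M^6$, so for $p \ge 5$ coprime to $M$ we have $v_p(R)=0$, and the inequality forces $4v_p(n) \le 3v_p(M) + 7v_p(A)$; one must then argue (using $A = 6M - n(n+1)(n+5)$ and $p \mid n$, so $v_p(A)=v_p(6M)=v_p(M)$ for $p\ge 5$) that this yields $v_p(n) \le v_p(M)$. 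This is the mechanism behind parts (a), (b1), (c1), and the ``divides $M$'' conclusions generally.

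The heart of the lemma, and the main obstacle, is the careful bookkeeping at $p=2$ and $p=3$, where the constant $v_p(54)$ (namely $v_2(54)=1$, $v_3(54)=3$) is nonzero and the reductions of $A$, $n\pm1$, $n+4$, and especially $R(n,M)$ are more delicate because the ``extra'' factors of $2$ and $3$ interact with the threshold conditions. Here the refined case split on $v_2(n)$ versus $\{2, 1+v_2(M)\}$ and on $v_3(n)$ versus $1+v_3(M)$ becomes essential: one must compute $v_2(R(n,M))$ and $v_3(R(n,M))$ in each regime, watching for cancellation in the leading term $2^{14}3^6M^6$ against the subleading terms of $R$ when the valuations of $n$ and $M$ are balanced in the prescribed way. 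The borderline cases $v_2(n)=1+v_2(M)$ and $v_3(n)=1+v_3(M)$ are precisely where $v_p(R)$ jumps, weakening the bound on $v_p(M)$ and thereby forcing the loss factors of $2$, $3$, or (in part d4) $12$; these are the computations I expect to be most error-prone.

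Finally I would assemble the local conclusions into the global divisibility statements. For a given part, the hypotheses fix $\gcd(n,6)$ and the valuations $v_2(n),v_3(n)$ relative to the thresholds; the $p\ge5$ analysis gives $v_p(n)\le v_p(M)$ for all large primes, and the $p\in\{2,3\}$ analysis gives $v_p(n) \le v_p(M) + \delta_p$ where $\delta_p\in\{0,1\}$ (in logarithmic terms $\delta_p$ records the extra power of $p$ dividing the multiplier $k$). Multiplying $M$ by $k=2^{\delta_2}3^{\delta_3}$ absorbs exactly these deficits, giving $v_p(n)\le v_p(kM)$ for every prime $p$ simultaneously, which is equivalent to $n \mid kM$. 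The precise value of $k$ in each part---$1$, $2$, $3$, $6$, or $12$---is read off from which threshold conditions are active, and matching these to the nine enumerated cases is the concluding step.
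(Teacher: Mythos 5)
Your overall strategy is exactly the paper's: compare $v_p(\mathrm{num})$ and $v_p(\mathrm{den})$ in \eqref{xyzt} prime by prime, settle the primes $p\ge 5$ by noting that $R(n,M)$ collapses to $2^{14}3^6M^6$ modulo $n$ and that $v_p(A)=v_p(M)$ once $v_p(n)>v_p(M)$, and then treat $p=2,3$ separately with the thresholds $v_2(n)\in\{2,1+v_2(M)\}$, $v_3(n)=1+v_3(M)$. Your $p\ge 5$ paragraph is essentially the paper's argument (it yields $4v_p(n)+6v_p(M)\le 10v_p(M)$, hence $v_p(n)\le v_p(M)$) and is sound.

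However, there is a genuine gap: the proposal never carries out the $p=2$ and $p=3$ analysis, and that analysis is the entire content of parts b1), c1), d1), d2) --- everything else is routine. The paper's proof there is a concrete computation: assuming $v_2(n)>1+v_2(M)$ and $v_2(n)\neq 2$ forces $v_2(n)\ge 3$, hence $v_2(n+4)=2$ and $v_2(A)=1+v_2(M)$, so $v_2(\mathrm{num})=10v_2(M)+15$ exactly; a term-by-term lower bound on $v_2(R(n,M))$ (each of the seven terms is at least $6v_2(M)+13$ under the assumption) then gives $v_2(\mathrm{den})>10v_2(M)+18$, a contradiction; similarly at $p=3$, the assumption $v_3(n)>1+v_3(M)$ gives $v_3(\mathrm{num})=10v_3(M)+7$ against $v_3(\mathrm{den})>10v_3(M)+14$. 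You defer precisely these steps as ``bookkeeping'' that you ``expect to be most error-prone,'' so the proof of the lemma's hard cases is not actually given. Moreover, your structural picture of the borderline cases is off: you claim the multipliers $2,3,12$ arise because ``$v_p(R)$ jumps, weakening the bound,'' but in the regimes $v_2(n)=1+v_2(M)$, $v_2(n)=2$, $v_3(n)=1+v_3(M)$ no valuation computation is attempted or even possible (there can be uncontrolled cancellation in $A=6M-n(n+1)(n+5)$, which enters the numerator to the seventh power), and none is needed --- the hypotheses make the conclusions $n\mid 2M$, $n\mid 4M$, $n\mid 3M$ hold trivially at those primes, and the computation is performed only in the complementary regimes, where it yields the clean bound $v_p(n)\le v_p(M)$. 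Relatedly, your assembly claim $\delta_p\in\{0,1\}$ is inconsistent with the lemma itself: cases b2) and d4) require $\delta_2=2$ (multipliers $4$ and $12$).
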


\begin{proof} Considering the numerator of $XYZT$ from \eqref{xyzt} modulo $n$ we find that $n$ divides $2^{15}3^7M^{10}$. 
Hence $v_2(n) \leq 10v_2(M)+15$, $v_3(n) \leq 10v_3(M)+7$ and $v_p(n) \leq 10v_p(M)$ for every prime $p>3$. 

Assume that $v_p(n)>v_p(M)$ for some prime $p>3$. Then it follows that $p$ divides $M$, $v_p(A)=v_p(M)$, and we have $v_p(\mbox{num})=10v_p(M)$. 
For the power of $p$ in the denominator we see that $v_p(R(n,M))=6v_p(M)$, whence $v_p(\mbox{den})=4v_p(n)+6v_p(M)$. 
Thus
\[ 10v_p\left(M\right) \geq 4v_p\left(n\right)+6v_p\left(M\right) > 10v_p\left(M\right), \] 
a contradiction. Therefore
$v_p\left(M\right)\ge v_p\left(n\right)$ for all primes $p>3$. This proves a) and makes obvious the statements in b2), b3), c2), d3) and d4). Note also that 
the conclusions modulo 2 in d1) will imply d2). Thus, it remains to prove b1), c1), and d1), where the primes $p=2$ and $p=3$ are only relevant. 

Assume now that $n$ is even, $v_2\left(n\right)\not\in \{1+v_2\left(M\right),2\}$, and $v_3\left(n\right)\neq 1+v_3\left(M\right)$.
We will argue by contradiction, assuming that $v_2\left(n\right) >1+v_2\left(M\right)$. Then it follows that $v_2\left(n+4\right)=2$ and, therefore, 
\[v_2(\mathrm{num})=10v_2\left(M\right)+15.\] 
For the denominator we first see that 
\begin{align*}
v_2&\left(R(n,M)\right)\ge \min \{6v_2\left(M\right)+14,5v_2\left(M\right)+\ v_2\left(n\right)+13,\\
& 4v_2\left(M\right)+2v_2\left(n\right)+10,\ 3v_2\left(M\right)+3v_2\left(n\right)+8,\\
& 2v_2\left(M\right)+4v_2\left(n\right)+5, 5v_2\left(n\right)+v_2\left(M\right)+4, 6v_2\left(n\right)+9\}.
\end{align*}
It is easy to see that $v_2\left(n\right)>1+v_2\left(M\right)$ implies that each term in the minimum is greater than or equal to $6v_2\left(M\right)+13$. 
We conclude that
\[v_2\left(\mathrm{den}\right)\ge 4v_2\left(n\right)+1+6v_2\left(M\right)+13> 10v_2\left(M\right)+18.\] 
Therefore,
\[ 10v_2\left(M\right)+18 < v_2(\text{num}) = 10v_2\left(M\right)+15, \]
a contradiction. Hence $v_2\left(n\right)\le v_2\left(M\right)$, 
which completes, in particular, the proof of b1). 

Finally, assume that $v_3\left(n\right)>1+v_3\left(M\right)$ for the proofs of c1) and d1). Then 
\[v_3(\mathrm{num})=10v_3\left(M\right)+7.\] 
On the other hand,
\begin{align*}
v_3&\left(R(n,M)\right)  \ge \min \{6v_3\left(M\right)+6,\ 5v_3\left(M\right)+v_3\left(n\right)+7,\\ 
& 4v_3\left(M\right)+2v_3\left(n\right)+5,\ 3v_3\left(M\right)+3v_3\left(n\right)+4,\\
& 2v_3\left(M\right)+4v_3\left(n\right)+3,\ v_3\left(M\right)+5v_3\left(n\right)+3,\ 6v_3\left(n\right)\}.
\end{align*}
It is easy to deduce that all terms in the last minimum are greater than or equal to $6v_3\left(M\right)+6$. Therefore,
\[v_3\left(\mathrm{den}\right) \geq 3+4v_3\left(n\right)+6v_3\left(M\right)+7 >10v_3(M)+14,\] 
whence 
\[  10v_3(M)+14 < v_3(\text{num}) = 10v_3\left(M\right)+7, \]
which is impossible. This completes the proof.
\end{proof}

We can bring some parts of the above lemma together. For example we can combine parts  c1, c2 as follows: 
if $\text{gcd}(n, 2)= 1$ and $3|n$, then $n|3M$. More general, we present a reformulation that is focused on the divisibility 
properties of $M$. 

\begin{corollary}  Let $C \subset \mathbb{S}^{n-1}$ be a spherical 4-distance 7-design of cardinality $|C|=M$.
Then $n$ divides $M$ if one of the following is fulfilled: 
\begin{itemize}
\item[a)] $gcd(n,6)=1$;
\item[b)] $2|n$, $gcd\left(n,3\right)=1$, and $v_2(n) \not\in \{2,1+v_2(M)\}$;
\item[c)] $gcd(n,2)=1$, $3|n,$ and $v_3\left(n\right)\neq 1+v_3\left(M\right)$;
\item[d)] $6|n$, $v_3(n) \neq 1+v_3(M),$ and $v_2(n) \not\in \{2,1+v_2(M)\}$.
\end{itemize}
Further, $n$ divides $2M$ if $gcd\left(n,3\right)=1$ and $v_2\left(n\right)=1+v_2\left(M\right)$;
$n$ divides $4M$ if $gcd\left(n,3\right)=1$ and $v_2\left(n\right)=2$; $n$ divides $3M$ if  
$gcd\left(n,2\right)=1$ and $v_3\left(n\right)=1+v_3\left(M\right)$ or $v_3(n)=1+v_3(M)$ and $v_2(n)=1+v_2(M)$;
$n$ divides $6M$ if $v_3(n)=1+v_3(M)$ and $v_2(n)=1+v_2(M)$; and $n$ divides $12M$ if  
$v_3(n)=1+v_3(M)$ and $v_2(n)=2$. In particular, $n$ divides $12M$.
\end{corollary}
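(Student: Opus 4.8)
The plan is to read the corollary straight off Lemma~\ref{lem-1}, whose proof already proceeds one prime at a time. First I would isolate from that proof three ``master'' bounds valid for every $4$-distance $7$-design: $v_p(n)\le v_p(M)$ for every prime $p>3$, together with $v_3(n)\le v_3(M)+1$ and $v_2(n)\le v_2(M)+2$. The first is exactly the contradiction argument for $p>3$. The bound for $p=3$ follows because the proof rules out $v_3(n)>1+v_3(M)$, leaving only $v_3(n)\le v_3(M)$ or $v_3(n)=1+v_3(M)$; the bound for $p=2$ follows similarly, since the only values of $v_2(n)$ not forced down to $v_2(n)\le v_2(M)$ are $v_2(n)=1+v_2(M)$ and $v_2(n)=2$, and each of these still satisfies $v_2(n)\le v_2(M)+2$.

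Next I would match the listed cases to the branches of Lemma~\ref{lem-1}. Parts a)--d) asserting $n\mid M$ are precisely a), b1), c1) and d1): in each, the hypotheses remove exactly the exceptional values of $v_2(n)$ and $v_3(n)$, so both master bounds sharpen to $v_2(n)\le v_2(M)$ and $v_3(n)\le v_3(M)$, which with $v_p(n)\le v_p(M)$ for $p>3$ gives $n\mid M$. The weaker divisibilities in the ``Further'' sentence are the complementary branches b3), b2), c2), d2), d3) and d4): when $v_2(n)=1+v_2(M)$ one retains a single factor $2$, when $v_2(n)=2$ a factor $4$, when $v_3(n)=1+v_3(M)$ a factor $3$, and these multiply across the two primes. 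The fusions of cases (for instance merging c1) and c2) into ``$\gcd(n,2)=1$ and $3\mid n$ imply $n\mid 3M$'') use only the trivial remark that $n\mid M$ implies $n\mid kM$, so the weaker common conclusion holds on each sub-branch.

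For the clause \emph{In particular, $n$ divides $12M$} I would simply feed the three master bounds into the factorisation of $12M$: since $v_2(12M)=v_2(M)+2\ge v_2(n)$, $v_3(12M)=v_3(M)+1\ge v_3(n)$ and $v_p(12M)=v_p(M)\ge v_p(n)$ for all $p>3$, every prime power dividing $n$ divides $12M$, so $n\mid 12M$. This also renders the case list automatically exhaustive, which is the one bookkeeping point I would verify explicitly: partition all $n$ by $\gcd(n,6)\in\{1,2,3,6\}$ and then by the values of $v_2(n)$ and $v_3(n)$, and confirm that every resulting sub-branch yields $n\mid kM$ for some $k\in\{1,2,3,4,6,12\}$, each of which divides $12M$.

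The main obstacle I anticipate is not a hard estimate but the need to certify that the modulo-$2$ and modulo-$3$ arguments of Lemma~\ref{lem-1} are genuinely independent, so that their worst cases may be multiplied. In particular I would confirm that the exclusion $v_2(n)\neq 2$, which is forced by the factor $(n+4)^4$ in the numerator of \eqref{xyzt} because $v_2(n)=2$ makes $v_2(n+4)\ge 3$, costs at most the single extra factor $4$ and does not interact with the factor $3$ coming from the case $v_3(n)=1+v_3(M)$. Once that independence is established, the product of the worst per-prime deficits is $4\cdot 3=12$, which is exactly the universal bound claimed.
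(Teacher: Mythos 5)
Your proposal is correct and matches the paper's treatment: the paper gives no separate proof of this corollary, presenting it as a direct regrouping of the cases of Lemma~\ref{lem-1}, which is exactly what you do --- your three per-prime ``master bounds'' ($v_p(n)\le v_p(M)$ for $p>3$, $v_3(n)\le v_3(M)+1$, $v_2(n)\le v_2(M)+2$) are a clean repackaging of parts a), b1)--b3), c1)--c2), d1)--d4), and the independence of the mod-$2$ and mod-$3$ arguments that you rightly insist on checking is implicitly used by the paper as well (cf.\ its remark that ``the conclusions modulo 2 in d1) will imply d2)''). The only discrepancy is in the statement rather than in your argument: the clause asserting $n\mid 3M$ when $v_3(n)=1+v_3(M)$ and $v_2(n)=1+v_2(M)$ is an editing slip in the paper, since under those hypotheses $v_2(3M)=v_2(M)<v_2(n)$ and only $n\mid 6M$ follows (as the next clause states), and your derivation correctly yields the factor $6$ rather than $3$ in that case.
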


We proceed with divisibility with respect to the prime divisors of $n+1$ as instructed from \eqref{xyzt}.  

\begin{lemma} \label{lem-2} 
If $XYZT$ is an integer, then the following statements hold true:

\begin{itemize}

\item[a)] if $gcd (n+1,6)=1$, then $n+1$ divides $M^2$;

\item[b)] if $n+1$ is even and $gcd(n+1,3)=1$, then $n+1$ divides $16M^2$;

\item[c)] if $n+1$ is odd and $3$ divides $n+1$, then $n+1$ divides $3M^2;$

\item[d)] if $6$ divides $n+1$, then $(n+1)^2$ divides $48M^4.$

\end{itemize}

\end{lemma}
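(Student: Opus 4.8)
The plan is to run, for every prime $p$ dividing $n+1$, the same valuation comparison that powers Lemma \ref{lem-1}, but reducing everything modulo powers of $n+1$ rather than $n$. Fix $p \mid n+1$ and write $\alpha = v_p(n+1)$, $m = v_p(M)$; since $\gcd(n,n+1)=1$ we have $v_p(n)=0$. Because $XYZT$ is an integer, $\eqref{xyzt}$ gives the master inequality $v_p(\mathrm{den}) \le v_p(\mathrm{num})$, i.e.
\[ v_p(54) + 2\alpha + v_p(R) \le 3m + 2v_p(n-1) + 4v_p(n+4) + 7v_p(A). \]
The substitution $n \equiv -1 \pmod{n+1}$ supplies all the reductions I need: $n-1 \equiv -2$, $n+4 \equiv 3$, $n+5 \equiv 4$, $A \equiv 6M$, and $R \equiv 2^{14}3^6 M^6$. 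More precisely, expanding $R$ in powers of $u=n+1$ shows that its $k$-th lower term carries a factor $u^{k}$, so that
\[ v_p(R) \ \ge\ \min_{0\le k\le 6}\bigl\{(6-k)m + k\alpha + \gamma_k\bigr\}, \qquad \gamma_0 = v_p(2^{14}3^6), \]
with explicit constants $\gamma_k$ equal to the $p$-adic valuation of the corresponding coefficient evaluated at $n\equiv -1$.

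First I would dispose of the primes $p>3$, the clean case, which yields part a). Here $v_p(54)=v_p(n-1)=v_p(n+4)=v_p(n+5)=0$, so $v_p(A)=\min(m,\alpha)$, and arguing by contradiction from $\alpha \ge 2m+1$ one gets $v_p(A)=m$ and $v_p(R)=6m$, the $M^6$-term strictly dominating once $\alpha>2m$. Then $v_p(\mathrm{den}) = 2\alpha + 6m \ge 10m+2 > 10m = v_p(\mathrm{num})$, a contradiction. Thus $\alpha \le 2m$ for every $p>3$, which is exactly $n+1 \mid M^2$ away from the primes $2$ and $3$.

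Next I would treat $p=3$ (giving part c) and $p=2$ (giving part b). At $p=3$ the new features are $v_3(54)=3$ and the divisibility of $n+4$ and $n+7$ by $3$, whose valuations must be tracked in terms of $\alpha$; carrying this out and assuming $\alpha \ge 2m+2$ for contradiction again isolates $2^{14}3^6M^6$ as the dominant term of $R$ and forces $v_3(\mathrm{den}) > v_3(\mathrm{num})$, leaving $\alpha \le 2m+1$, i.e. the extra factor $3$. At $p=2$ the analysis is analogous but heavier: $n-1$, $n+3$ and $n+5$ are all even, and their $2$-adic valuations behave piecewise in $\alpha$ (for instance $v_2(n+5)$ jumps as $\alpha$ crosses $2$), producing several subcases whose worst one yields the bound behind the factor $16$. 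Parts a)--c) then combine prime-by-prime into the stated single-power divisibilities, and part d) follows by collecting the bounds at $p=2$, $p=3$ and $p>3$ and squaring, the factor $48$ recording the $2$- and $3$-adic slack.

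The main obstacle I anticipate is precisely the $p=2$ bookkeeping, together with the boundary cases at $p=3$: when the minimal valuation in the expansion of $R$ is attained by two terms simultaneously, whether the extreme value of $\alpha$ is admissible depends on a residue computation for the colliding leading coefficients (at $p=3$, for example, a condition on $(n+1)/3^{\alpha}$ modulo $3$). These tie-breaking cancellations are the delicate part of the argument, and they are what ultimately pin down the precise constants $16$, $3$ and $48$ rather than cruder ones.
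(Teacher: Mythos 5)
Your proposal follows the same route as the paper's own proof: the inequality $v_p(\mathrm{den})\le v_p(\mathrm{num})$ applied to \eqref{xyzt} prime by prime, the reductions $n\equiv -1 \pmod{n+1}$, and contradiction hypotheses $\alpha>2m$ for $p>3$ and $\alpha\ge 2m+2$ for $p=3$; for $p=2$ the paper needs no subcase analysis or residue tie-breaking at all --- it simply assumes $v_2(n+1)\ge 2v_2(M)+5$, which pins down $v_2(A)=1+v_2(M)$, $v_2(n-1)=1$, $v_2(n+4)=0$, hence $v_2(\mathrm{num})=10v_2(M)+9$ against $v_2(\mathrm{den})\ge 2v_2(n+1)+6v_2(M)+15$, a contradiction, so $v_2(n+1)\le 2v_2(M)+4$ and the factor $16$ appears. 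So your parts a), b), c) are sound and essentially identical to the paper. One inaccuracy you should repair: your transplant of the Lemma \ref{lem-1} structure is not literal, because the coefficient of $M^{6-k}$ in $R(n,M)$ carries the factor $(n+1)^{e_k}$ with $(e_0,\dots,e_6)=(0,1,1,2,3,4,5)$, \emph{not} $(n+1)^k$ (that pattern holds for powers of $n$, not of $n+1$). Your $\min$ formula therefore overstates the $(n+1)$-content of $R$. It happens to be harmless here: the binding term is the $M^4$ term, with a single factor of $n+1$, and it is exactly this term that produces the threshold $\alpha>2m$; under your assumed hypotheses the $M^6$ term still dominates and the final inequality comes from the numerator/denominator comparison anyway. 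But as stated your bound on $v_p(R)$ is false.

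The genuine gap is part d). From the bounds that the argument actually establishes --- $v_2(n+1)\le 2v_2(M)+4$, $v_3(n+1)\le 2v_3(M)+1$, and $v_p(n+1)\le 2v_p(M)$ for $p>3$ --- ``collecting and squaring'' yields $n+1\mid 48M^2$ and hence $(n+1)^2\mid 48^2M^4=2304M^4$, not $(n+1)^2\mid 48M^4$: squaring doubles the $2$- and $3$-adic slack rather than ``recording'' it, so the constant you end up with is $48^2$, not $48$. To reach $48M^4$ one would need $n+1\mid 16M^2$ and $n+1\mid 3M^2$ to hold \emph{simultaneously} when $6\mid n+1$ (then $(n+1)^2$ divides their product), i.e. $v_3(n+1)\le 2v_3(M)$ and $v_2(n+1)\le 2v_2(M)$, and neither of these sharper bounds is proved. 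In fairness, the paper's own proof of d) does exactly what you do --- it combines the $p=2$ and $p=3$ conclusions of b) and c) and declares d) proved --- so you have reproduced the published argument, gap included; but you should be aware that d) as stated does not follow from these valuation bounds, and what the method honestly delivers is $n+1\mid 48M^2$, equivalently $(n+1)^2\mid 2304M^4$.
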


\begin{proof} Considering the numerator in \eqref{xyzt} modulo $n+1$ we conclude that $n+1$ divides $2^9.3^{11}M^{10}$.

If a prime $p>3$ divides $n+1,$ then it divides $M$. We assume for a contradiction that
that there exists a prime $p>3$ such that $ v_p(n+1)> 2v_p(M)$. Then the power of $p$ in the numerator is
\[ v_p(\mbox{num})=10v_p\left(M\right). \] 
Further, it also follows from our assumption that $v_p\left(R(n,M)\right) = 6v_p\left(M\right)$, whence  
\[v_p(\mbox{den}) =6v_p\left(M\right)+2v_p\left(n+1\right).\] 
Therefore  
\[ 6v_p\left(M\right)+2v_p\left(n+1\right) \le 10v_p\left(M\right), \] 
which is impossible when $ v_p(n+1)> 2v_p(M)$. 
This completes the proof of a) and leaves the proofs of b), c) and d) only up to 
considerations of the powers of 2 and 3. 

Modulo 2 considerations are needed for b) and d) only. We assume $v_2(n+1)\ge 2v_2(M)+5$ 
for a contradiction. This implies that $v_2(A)=1+v_2(M)$, $v_2(n-1)=1$, and $n+4$ is odd. Therefore 
\[v_2(\mbox{num})=10v_2\left(M\right)+9.\] 
To estimate the power of 2 in the denominator we observe that 
\begin{align*} 
v_2\left(R(n,M)\right) & \ge \min \{6v_2\left(M\right)+14, v_2\left(n+1\right)+4v_2\left(M\right)+9\} \\
& = 6v_2\left(M\right)+14
\end{align*}
to conclude that
\[ v_2(\mbox{den})\ge 6v_2\left(M\right)+2v_2\left(n+1\right)+15.  \]
Hence, 
\[ 2v_2(n+1)+6v_2(M)+15 \leq 10v_2\left(M\right)+9, \]
i.e. $v_2(n+1)+3 \le 2v_2(M)$, which is impossible when 
$v_2(n+1)\ge 2v_2(M)+5$. This completes the proof of b) and what concerns the power of 2 in d).

Finally, in our course to finish the proof in c) and d), we assume that $v_3\left(n+1\right)>1+2v_3\left(M\right)$
for a contradiction. Similarly to above we see that $v_3(A)=1+v_3(M)$, $v_2(n+4)=1$ and $v_3(n-1)=0$, whence 
\begin{align*}
v_3(\mbox{num})& = 3v_3\left(M\right)+7\left(1+v_3\left(M\right)\right)+4 \\
& =10v_3\left(M\right)+11.
\end{align*} 
For the denominator we first see that 
\[ v_3\left(R(n,M)\right)\ge 6v_3\left(M\right)+6 \]
giving that 
\[ v_3\left(\mbox{den}\right) \geq 6v_3\left(M\right)+2v_3\left(n+1\right)+9, \]
which contradicts to our assumption. 
\end{proof}

In our computer investigation below we will use the following reformulation. 

\begin{corollary}
Let $C \subset \mathbb{S}^{n-1}$ be a spherical 4-distance 7-design of cardinality $|C|=M$.
Then $n+1$ divides $4M^2$. 
\end{corollary}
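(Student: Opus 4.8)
The plan is to rewrite the divisibility $n+1\mid 4M^2$ as the family of local inequalities
\[ v_2(n+1)\le 2+2v_2(M),\quad v_3(n+1)\le 2v_3(M),\quad v_p(n+1)\le 2v_p(M)\ (p\ge 5), \]
and to establish each of them from the integrality of $XYZT$. Since $X,Y,Z,T$ are sizes of a distance distribution they are positive integers, so $XYZT$ is a positive integer and the formula \eqref{xyzt} is in force (the standing assumption that $C$ is not a tight $7$-design keeps its denominator nonzero). For $p\ge 5$ nothing new is needed: the opening paragraph of the proof of Lemma \ref{lem-2} already yields $v_p(n+1)\le 2v_p(M)$, and $v_p(4)=0$, so these primes are settled.

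The two genuinely local primes are $p=2$ and $p=3$, and for them Lemma \ref{lem-2} only delivers the weaker bounds $v_2(n+1)\le 4+2v_2(M)$ (parts b), d)) and $v_3(n+1)\le 1+2v_3(M)$ (parts c), d)). To recover the missing factors I would reopen the comparison $v_p(\mathrm{den})\le v_p(\mathrm{num})$ for \eqref{xyzt} precisely at the borderline exponents, this time using the \emph{exact} $p$-adic valuations of the seven monomials of $R(n,M)$ rather than the crude lower bounds of Lemma \ref{lem-2}. For $p=2$ I would test $v_2(n+1)\in\{2v_2(M)+3,\ 2v_2(M)+4\}$. Evaluating the cofactors at $n\equiv-1$ to the relevant power (for instance $n^4+93n^3+629n^2+1339n+818$ equals $16$ at $n=-1$, so its $2$-adic valuation is $4$), one checks that the leading monomial $2^{14}3^6M^6$ is the \emph{strict} minimizer of the $2$-adic valuation among all seven terms; then $v_2(R)$ is pinned down exactly, $v_2(\mathrm{den})$ comes out strictly larger than $v_2(\mathrm{num})$, and both borderline values are excluded. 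This gives the sharp bound $v_2(n+1)\le 2+2v_2(M)$.

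The decisive case is $p=3$, where only the single value $v_3(n+1)=1+2v_3(M)$ must be ruled out. At this exponent two monomials of $R(n,M)$ — the term $2^{14}3^6M^6$ and the term carrying $(n+1)\bigl(n^4+93n^3+629n^2+1339n+818\bigr)M^4$ — share the same minimal $3$-adic valuation $6+6v_3(M)$, so no single term controls $v_3(R)$ and the naive estimate returns only $v_3(n+1)\le 1+2v_3(M)$. The crux is therefore a cancellation argument: reducing the two leading $3$-adic coefficients modulo $3$ (using $n\equiv-1$ to the relevant power, the unit part of $n+1$, and the congruence $n^4+93n^3+629n^2+1339n+818\equiv 1\pmod 3$) and showing their sum is divisible by $3$, which lifts $v_3(R)$ by at least one unit and forces $v_3(\mathrm{den})$ to exceed $v_3(\mathrm{num})$. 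Showing that these two minimal terms really do combine into a non-unit — the step that hinges most delicately on the residue of $n+1$, and the same phenomenon that underlies the sharp power-of-$3$ content of part d) of Lemma \ref{lem-2} — is the main obstacle I anticipate. Once the three local inequalities are in hand, they reassemble into $n+1\mid 4M^2$.
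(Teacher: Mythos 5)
Your reduction to local inequalities, your $p\ge 5$ case, and your $p=2$ sharpening are all sound, and the $p=2$ repair is genuinely needed: the paper offers no proof of this corollary beyond calling it a ``reformulation'' of Lemma~\ref{lem-2}, and the proof of that lemma only establishes $v_2(n+1)\le 2v_2(M)+4$, $v_3(n+1)\le 2v_3(M)+1$, and $v_p(n+1)\le 2v_p(M)$ for $p>3$, i.e.\ only $n+1\mid 48M^2$. Your borderline analysis at $v_2(n+1)\in\{2v_2(M)+3,\,2v_2(M)+4\}$ does work: $v_2(\mathrm{num})=10v_2(M)+9$ exactly, every monomial of $R(n,M)$ has $2$-adic valuation at least $14+6v_2(M)$ (the delicate third monomial being controlled by $Q\equiv Q(-1)=16\pmod{n+1}$ for $Q=n^4+93n^3+629n^2+1339n+818$), hence $v_2(\mathrm{den})\ge 15+2v_2(n+1)+6v_2(M)>v_2(\mathrm{num})$, a contradiction.

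The step you flag as the main obstacle, however, is a genuine gap, and it cannot be closed: $v_3(n+1)\le 2v_3(M)$ is simply not a consequence of the integrality of $XYZT$. Write $\mu=v_3(M)$, $\nu=v_3(n+1)=1+2\mu$, $u=(n+1)/3^\nu$, $m=M/3^\mu$. First, for $\mu=0$ a third monomial joins your two minimal ones (the term $-2^73^4n^3(n+1)^2(13n^5+\cdots+9998)M^3$ also has valuation $6+6\mu$), and the sum of unit coefficients is $1+2u+2m\pmod 3$ (it is $1+2u$ for $\mu\ge 1$); this is \emph{not} always divisible by $3$, e.g.\ not for $u\equiv 1$, $m\equiv 2\pmod 3$, so the hoped-for cancellation fails in general. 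Second, and fatally, when the cancellation fails no contradiction follows anyway, because the numerator's valuation matches the denominator's: for $\mu=0$, $u\equiv 1$, $m\equiv 2\pmod 3$ one computes $v_3(A)=1$ (from $A=3\,(2M-un(n+5))$ and $2M+u\equiv 2$), $v_3(n+4)=1$, $v_3(R)=6$, hence $v_3(\mathrm{num})=4+7=11=3+2+6=v_3(\mathrm{den})$, so $XYZT$ is a $3$-adic unit and the comparison is silent; the same equality $v_3(\mathrm{num})=v_3(\mathrm{den})=10\mu+11$ occurs for every $\mu\ge 1$ with $u\equiv 2\pmod 3$. Thus the exponent $v_3(n+1)=1+2v_3(M)$ survives every $3$-adic test extractable from \eqref{xyzt}; what this route actually proves is $n+1\mid 12M^2$, not $n+1\mid 4M^2$. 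The missing factor of $3$ is therefore not a defect of your execution but of the statement itself: the corollary (and likewise the claim $(n+1)^2\mid 48M^4$ in part d) of Lemma~\ref{lem-2}) asserts more than the paper's lemma proves, and removing that factor would require input beyond $XYZT\in\mathbb{Z}$, such as the separate integrality of $X,Y,Z,T$ or the Nozaki condition \eqref{noz-cond}. This also means the paper's use of $n+1\mid 4M^2$ as a pre-filter in its computer search is not justified by its own results.
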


\subsection{Further divisibility conditions}

In \cite{Noz}, Nozaki proved necessary conditions for existence of spherical $k$-distance sets as a generalization 
of the classical Larman-Roges-Seidel result on 2-distance sets \cite{LRS}. We present his result here in our context 
to find another divisibility condition.

The Nozaki's Theorem 5.1 \cite{Noz} states that the number
\[ k_a:=\frac{(1-b)(1-c)(1-d)}{(a-b)(a-c)(a-d)} \]
and its analogs for $b$, $c$ and $d$ are integers which sum up to 1 and their absolute values do not exceed 
\[ \left\lfloor{\frac{1}{2}+\sqrt{\frac{N^2}{2N-2}+\frac{1}{4}}}\right\rfloor,\]
where 
\[ N={n+2 \choose 3}+{n+1 \choose 2} =\frac{n(n+1)(n+5)}{6}. \]

Applying our technique (i.e. employing the equation \eqref{lev_equ}) to this, we obtain 
 \begin{equation} \label{noz-cond}
k_ak_bk_ck_d=\frac{2M^3(n+1)(n+4)^2(n-1)^3A^3}{R(n,M)},
\end{equation}
which should be integer. We were unable to get further divisibility conclusions from \eqref{noz-cond}
but combinations of \eqref{xyzt} and \eqref{noz-cond} could give something. However, the proofs of 
Lemmas 3 and 5 could be probably simplified by using  \eqref{noz-cond} instead of \eqref{xyzt}.

\section{Computer assisted investigations of the integrality conditions}

We describe the computer investigations of the integrality of $XYZT$ based on the divisibility conditions from Lemmas 3 and 5
and general investigations of the integrality of $XYZT$ and $k_ak_bk_ck_d$ from \eqref{noz-cond}. 

For fixed $n \geq 3$ all positive integers $M$ in the range defined by \eqref{M-range} are checked (with a strict inequality 
$M>2{n+2 \choose 3}$ to skip the considerations of tight spherical 7-designs). We first determine whether 
the pair $(n,M)$ satisfies the conditions of Lemmas 3 and 5 (i.e., whether $n|12M$ and $n+1|4M^2$). Then, for each such pair 
we check if $XYZT$ is integer. 
If the pair $(n,M)$ passes this test (i.e., if the number $XYZT$ is integer) we compute the values of $ a,b,c,d$ 
as roots of the equation \eqref{lev_equ} and find the values of $X$, $Y$, $Z$, and $T$ themselves. The last part can be realized via
computation of $k_a$, $k_b$, $k_c$, and $k_d$ as well. 

This computation yields the following (computer-assisted) result. 

\begin{theorem} \label{thm-main}
There exist no spherical 4-distance 7-designs in dimensions $3 \leq n \leq 1000$ if
\[M>2{n+2 \choose 3}.\]
\end {theorem}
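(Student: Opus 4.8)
The plan is to prove the theorem by an exhaustive but carefully structured search over all admissible pairs $(n,M)$, eliminating each one by exhibiting a necessary condition for the existence of a non-tight spherical 4-distance 7-design that it violates. For each dimension $n$ with $3 \le n \le 1000$ the cardinality $M$ is confined to the finite interval of \eqref{M-range}, with the strict lower bound $M > 2\binom{n+2}{3}$ imposed to exclude tight 7-designs. The idea is to run a cascade of tests of increasing cost, each of which is a necessary condition, arranged so that the cheap \emph{exact} tests remove the overwhelming majority of candidates before any expensive computation is attempted.

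First I would \emph{generate}, rather than merely test, the candidates surviving the divisibility conditions. By the corollary to Lemma \ref{lem-1} we need $n \mid 12M$, and by the corollary to Lemma \ref{lem-2} we need $(n+1) \mid 4M^2$; since $\gcd(n,n+1)=1$ these two congruences may be combined by the Chinese Remainder Theorem into a union of arithmetic progressions modulo $n(n+1)$, so that the admissible $M$ are enumerated directly without scanning the full interval. This is essential for feasibility: the raw interval \eqref{M-range} has length of order $n^4$, while the first congruence already thins the candidates by a factor of order $n$.

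For every surviving pair I would then apply the exact integrality test on $XYZT$ using the closed form \eqref{xyzt}. Because the numerator and the polynomial $R(n,M)$ have integer coefficients, this amounts to a single exact rational computation followed by a divisibility check, carried out entirely in integer arithmetic with no loss of rigor; any pair for which $XYZT \notin \mathbb{Z}$ is discarded immediately, and the analogous integrality of $k_ak_bk_ck_d$ from \eqref{noz-cond} serves as an independent filter of the same exact character. For each of the few pairs that pass all of these necessary conditions, I would finally solve the quartic \eqref{lev_equ} to obtain $a,b,c,d$, verify that they are four distinct real numbers in $[-1,1)$ obeying $b<0<c$ and the inequalities \eqref{ineq-inner}, and substitute them into the explicit formulas of Section 2 to recover $X,Y,Z,T$; a genuine design requires all four to be positive integers. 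The claim is that no pair in the stated range survives to this point with a consistent solution.

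The hard part will be keeping this last step certifiably rigorous while remaining efficient. The exact filters (divisibility and the integrality of $XYZT$) are logically airtight, but a pair passing them is only a candidate, since these conditions are necessary and not sufficient; one must still certify that the associated $a,b,c,d$ and $X,Y,Z,T$ do not form an admissible design. As the roots of \eqref{lev_equ} are in general irrational algebraic numbers, I would certify the final rejection either by a Sturm-sequence analysis showing that $f$ fails to have four real roots in the required range, or, when the roots are real, by interval arithmetic of sufficient precision to separate each computed $X,Y,Z,T$ from the nearest positive integer (or to exhibit a negative value). In practice the exact filters are so restrictive that essentially all pairs are removed before this stage, so the residual numerical certification is confined to a short, explicitly listable set of exceptional pairs, which keeps the overall argument both rigorous and computationally tractable throughout $n \le 1000$.
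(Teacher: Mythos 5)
Your proposal is correct and follows essentially the same route as the paper: filter candidate pairs $(n,M)$ in the range \eqref{M-range} by the divisibility conditions of Lemmas \ref{lem-1} and \ref{lem-2}, then by exact integrality of $XYZT$ from \eqref{xyzt} (and of $k_ak_bk_ck_d$ from \eqref{noz-cond}), and finally reject the few survivors by computing the roots of \eqref{lev_equ} and checking that $X,Y,Z,T$ are not integers. Your refinements --- generating $M$ via the Chinese Remainder Theorem instead of scanning, and certifying the final non-integrality by interval arithmetic or Sturm sequences rather than ``enough precision'' --- are sensible implementation improvements but do not change the argument.
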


In other words, the only possible spherical 4-distance 7-designs  in dimensions $3 \leq n \leq 1000$ are
the tight spherical 7-designs (see the comment after Conjecture 2). 

In these dimensions, there are 321 pairs $(n,M)$ that pass the tests of Lemmas 3 and 5 and, furthermore,  $XYZT$ is an integer. 
This was verified by a simple Python program. There is no dimension with more than two suitable values of $M$ and there are 
12 dimensions with two possible values of $M$ each. 

Further, in all 321 cases none of $X$, $Y$, $Z$, and $T$ is an integer. This was independently verified by a simple Maple program. 
With the last check, the non-integrality of the parameters $k_a$, $k_b$, $k_c$, and $k_d$ was also verified. 

It is up to computing power to proceed with this proof. We used only a general purpose laptop and stopped at $n=1000$ as it 
already started to take about three hours of computation per dimension. It seems that dimensions $n \leq 2000$ are reachable 
by this approach and more powerful computers. All our programs and the most important intermediate data are available upon request.

\begin{example} \label{ex1}
The data from the first case that passes the integrality test of $XYZT$ is as follows. For $(n,M)=(7,196)$ we have that $n=7$ divides $2M=392$ and
$n+1=8$ divides $4M^2=153664$, i.e. the conditions from Lemmas 3 and 5 are satisfied. Then
\[ XYZT=1185921 \] 
from \eqref{xyzt} and 
\[ k_ak_bk_ck_d=121 \] from \eqref{noz-cond}, i.e. our condition and the Nozaki's condition are satisfied and there is no contradiction so far. 
Thus we find the polynomial \eqref{lev_equ} as 
\[ f(t)= 49896t^4+33264t^3-18144t^2-9072t+504, \]
whence the inner products are computed, 
\[ a \approx -0.821721, \ \ b \approx -0.442124,  \]
\[ c \approx 0.0508952, \ \ d \approx 0.546284. \]
This gives (with enough precision) the numbers
\[ X \approx 5.63, \ \ Y \approx 41.57, \ \ Z \approx 93.84, \ \ T \approx 53.95,  \]
clearly non-integer. Therefore the existence question for spherical 4-distance 7-designs on $\mathbb{S}^6$ with 196 points is
resolved in the negative.
\end{example}

We also completed a brute force investigation of both $XYZT$ and $k_ak_bk_ck_d$ in dimensions $n \leq 215$ with a Python program. 
There are 72 pairs that pass the $XYZT$ test and 27 pairs that pass the $k_ak_bk_ck_d$ test, the smallest one being $(n,M)=(7,196)$ 
(it passes both tests, as mentioned in Example \ref{ex1}). It is clear that these 72 pairs are the same as these passing the test via Lemmas 3 and 5 since the lemmas are based
on the investigation of the integrality of $XYZT$. In all these cases we compute $X$, $Y$, $Z$ and $T$ to see that they are not integers, confirming this way Theorem \ref{thm-main}. 
The integrality of  $k_ak_bk_ck_d$ is usually stronger, despite there are two cases that pass the $k_ak_bk_ck_d$ test while failing 
to pass the $XYZT$ test. The disadvantage of the integrality criterion of \eqref{noz-cond} is that one can not start with prime 
divisors of $n$ and $n+1$. Of course, the strongest test is the integrality of $X$, $Y$, $Z$, and $T$ but we do not see other approaches
than brute force investigation of the formulas from Section 2.

\section*{Acknowledgments}
The research of the first author was supported, in part, by a Bulgarian NSF contract KP-06-Russia/33.

\end{document}